\theoremstyle{plain}
\newtheorem{thm}{Theorem}[section]
\newtheorem*{thm*}{Theorem}
\newtheorem*{prop*}{Proposition}
\newtheorem{cor}{Corollary}[section]
\newtheorem*{cor*}{Corollary}
\newtheorem{lem}{Lemma}[section]
\newtheorem*{lem*}{Lemma}
\theoremstyle{definition}
\newtheorem{defn}{Definition}[section]
\newtheorem*{defn*}{Definition}
\newtheorem*{exmp*}{Example}
\newtheorem{exmps}{Examples}[section]
\newtheorem*{exmps*}{Examples}
\newtheorem{rem}{Remark}[section]
\newtheorem*{rem*}{Remark}
\newtheorem{rems}{Remarks}[section]
\newtheorem*{rems*}{Remarks}
\newtheorem*{note*}{Note}
\newcommand{\N}{{\mathbb N}}
\newcommand{\Z}{{\mathbb Z}}
\newcommand{\R}{{\mathbb R}}
\newcommand{\C}{{\mathbb C}}
\newcommand{\F}{{\mathbb F}}
\begin{document}
\title[general construct
of chaotic unbounded linear operators]
{On general construct\\
of chaotic unbounded linear operators\\
in Banach Spaces with Schauder Bases}
\author[Marat V. Markin]{Marat V. Markin}
\address{
Department of Mathematics\newline
California State University, Fresno\newline
5245 N. Backer Avenue, M/S PB 108\newline
Fresno, CA 93740-8001
}
\email{mmarkin@csufresno.edu}
\subjclass[2010]{Primary 47A16; Secondary 47B40, 47B15}
\keywords{Hyperciclycity, chaoticity, Schauder basis, scalar type spectral operator, normal operator}
\begin{abstract}
We utilize the idea underlying the construct of the classical weighted backward shift Rolewicz's operators to furnish a straightforward approach to a general construct of chaotic unbounded linear operators in a (real or complex) Banach space with a Schauder basis.
\end{abstract}
\maketitle

\section[Introduction]{Introduction}

The notions of \textit{hypercyclicity} and \textit{chaoticity}, traditionally considered and well studied for \textit{continuous} linear operators on Fr\'echet spaces, in particular for \textit{bounded} linear operators on Banach spaces, and shown to be a purely infinite-dimensional phenomena (see, e.g., \cite{Grosse-Erdmann-Manguillot,Guirao-Montesinos-Zizler,Rolewicz1969}), in \cite{B-Ch-S2001,deL-E-G-E2003} are extended 
to \textit{unbounded} linear ope\-rators in Banach spaces, where also found are sufficient conditions for unbounded hypercyclicity and certain examples of hypercyclic and chaotic unbounded linear differential operators.

\begin{defn}[Hypercyclicity and Chaoticity \cite{B-Ch-S2001,deL-E-G-E2003}] Let
\[
A:X\supseteq D(A)\to X
\]
be a (bounded or unbounded) linear operator in a (real or complex) Banach space $(X,\|\cdot\|)$. A nonzero vector 
\begin{equation}\label{idv}
x\in C^\infty(A):=\bigcap_{n=0}^{\infty}D(A^n)
\end{equation}
($A^0:=I$, $I$ is the \textit{identity operator} on $X$)
is called \textit{hypercyclic} if its orbit under $A$
\[
\left\{A^nx\right\}_{n\in\Z_+}
\]
($\Z_+:=\left\{0,1,2,\dots\right\}$ is the set of nonnegative integers) is dense in $X$.

Linear operators possessing hypercyclic vectors are
said to be \textit{hypercyclic}.

If there exist an $N\in \N$ ($\N:=\left\{1,2,\dots\right\}$ is the set of natural numbers) and a vector $x\in D(A^N)$ such that $A^Nx = x$, such a vector is called a \textit{periodic point} for the operator $A$.

Hypercyclic linear operators with a dense set of periodic points are said to be \textit{chaotic}.
\end{defn}

\begin{samepage}
\begin{rems}\
\begin{itemize}
\item In the definition of hypercyclicity, the underlying space must necessarily be \textit{separable}.
\item For a hypercyclic linear operator $A$, the subspace $C^\infty(A)$ (see \eqref{idv}), which contains the dense orbit of a hypercyclic vector, is also \textit{dense}.
\item As is easily seen, a periodic point $x$ of $A$, if any, also belongs to $C^\infty(A)$.
\end{itemize} 
\end{rems} 
\end{samepage}

In \cite{Markin2018(7)}, (bounded or unbounded) \textit{scalar type spectral operators} in a complex Banach, in particular \textit{normal} ones in a complex Hilbert space, (see, e.g., \cite{Dunford1954,Survey58,Dun-SchI,Dun-SchII,Dun-SchIII,Plesner}) are proven to be \textit{non-hypercyclic}. 

In \cite{Rolewicz1969}, S. Rolewicz gave the first example of a hypercyclic bounded linear operator on a Banach space (see also \cite{Grosse-Erdmann-Manguillot,Guirao-Montesinos-Zizler}), which on (real or complex) sequence space $l_p$ ($1\le p<\infty$) or $c_0$ (of vanishing sequences), the latter equipped with the supremum norm
\[
c_0\ni x:=(x_k)_{k\in \N}\mapsto \|x\|_\infty:=\sup_{k\in \N}|x_k|,
\] 
is the following weighted backward shift:
\[
A(x_k)_{k\in \N}:=\lambda(x_{k+1})_{k\in \N},
\]
where $\lambda\in \R$ or $\lambda\in \C$ with $|\lambda|>1$ is arbitrary. Rolewicz's operators also happen to be \textit{chaotic} (see, e.g., \cite[Example $2.32$]{Grosse-Erdmann-Manguillot}).

In \cite[Theorem $2.1$]{Markin2018(9)}, Rolewicz's example is modified to produce hypercyclic unbounded linear operators in $l_p$ ($1\le p<\infty$) or $c_0$ as follows.

\begin{thm}[{\cite[Theorem $2.1$]{Markin2018(9)}}]\label{TH}
In the (real or complex) sequence space $X:=l_p$ ($1\le p<\infty$) or $X:=c_0$, the weighted backward shift
\[
D(A)\ni x:=(x_k)_{k\in \N} \mapsto Ax:=(\lambda^k x_{k+1})_{k\in \N},
\]
where $\lambda\in \R$ or $\lambda\in \C$, $|\lambda|>1$ is arbitrary, with the domain
\[
D(A):=\left\{(x_k)_{k\in \N}\in X\,\middle|\,(\lambda^k x_{k+1})_{n\in \N}\in X \right\}
\]
is a hypercyclic unbounded linear operator.
\end{thm}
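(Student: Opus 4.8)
The plan is to prove the two assertions separately: that $A$ is unbounded, and that $A$ is hypercyclic, the latter by verifying an unbounded counterpart of the Kitai/Gethner--Shapiro Hypercyclicity Criterion supplied by the references \cite{B-Ch-S2001,deL-E-G-E2003}. Throughout I would work with the standard Schauder basis $\{e_n\}_{n\in\N}$ of $X$, where $e_n:=(\delta_{nk})_{k\in\N}$, and with the dense subspace $Y:=\spa\{e_n\mid n\in\N\}$ of finitely supported sequences. Since every $e_n$ is finitely supported, $e_n\in D(A)$ and a one-line computation gives $Ae_n=\lambda^{n-1}e_{n-1}$ for $n\ge2$ (and $Ae_1=0$); consequently $\|Ae_n\|/\|e_n\|=|\lambda|^{n-1}\to\infty$ because $|\lambda|>1$, so $A$ is unbounded. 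The same computation shows $Y\subseteq D(A)$, so that $A$ is densely defined, and, since $A$ carries finitely supported sequences to finitely supported sequences, $Y\subseteq C^\infty(A)$.

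Next I would check the three hypotheses of the criterion on $Y$. For the forward iterates, iterating $Ae_n=\lambda^{n-1}e_{n-1}$ yields $A^ne_j=0$ as soon as $n\ge j$, so $A^ny=0$ eventually, hence $A^ny\to0$, for every $y\in Y$. For the right inverse I would introduce the weighted forward shift $S$ on $Y$ determined by $Se_j:=\lambda^{-j}e_{j+1}$, i.e. $Sy=(0,\lambda^{-1}y_1,\lambda^{-2}y_2,\dots)$; it maps $Y$ into $Y$ and satisfies $ASe_j=\lambda^{-j}Ae_{j+1}=\lambda^{-j}\lambda^{j}e_j=e_j$, so $ASy=y$ for all $y\in Y$, and iterating this identity gives $A^nS^ny=y$ on $Y$. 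Finally, composing the shifts accumulates the weights into $S^ne_j=\lambda^{-(nj+n(n-1)/2)}e_{j+n}$, so $\|S^ne_j\|=|\lambda|^{-(nj+n(n-1)/2)}\to0$ as $n\to\infty$ (again because $|\lambda|>1$), and by linearity $S^ny\to0$ for every $y\in Y$. With a dense $Y\subseteq C^\infty(A)$, the vanishing of $A^n$ and $S^n$ on $Y$, and the relation $ASy=y$, all the hypotheses are in force, and the criterion delivers hypercyclicity.

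It remains to understand why the criterion holds, which is the step I expect to cause the real trouble. The transitivity engine is clean: given $u,v\in Y$, the vector $x:=u+S^nv$ lies in $C^\infty(A)$, tends to $u$ as $n\to\infty$ (since $S^nv\to0$), and satisfies $A^nx=A^nu+A^nS^nv=A^nu+v=v$ for all large $n$ (since $A^nu=0$ eventually); thus iterates of points near $u$ are driven arbitrarily close to $v$. The obstacle is the passage from this transitivity to an honest hypercyclic vector: for a bounded operator one invokes the Birkhoff transitivity theorem, but here $A$ is discontinuous, so the preimages $A^{-n}(V)$ need not be open and the naive Baire-category argument collapses. The way I would resolve this --- and the way it is handled in \cite{B-Ch-S2001,deL-E-G-E2003} --- is to build the hypercyclic vector explicitly as a rapidly convergent series $x=\sum_j S^{n_j}v_j$ over a dense sequence $\{v_j\}\subseteq Y$, choosing the exponents $n_j$ so sparse that the tails of $A^{n_m}x$ are negligible while $A^{n_m}S^{n_m}v_m=v_m$; the one genuinely delicate point is interchanging $A^{n_m}$ with the infinite sum, which is legitimized by the fact that $A$, and each power $A^n$, is closed --- an easy coordinatewise check on the maximal domain $D(A)$. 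Once closedness keeps every partial sum and its image inside $C^\infty(A)$, the discontinuity of $A$ does no further harm and the construction goes through.
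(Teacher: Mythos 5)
Your proposal is correct and follows essentially the same route as the paper: the proof of the general Theorem \ref{Thm1} (specialized to $w_k=\lambda^k$, which yields Corollary \ref{CC}) likewise builds the right inverse $Bx=\sum_{k}\lambda^{-k}x_ke_{k+1}$, forms the hypercyclic vector as a sparse series $x=\sum_j B^{n_j}y^{(j)}$ over an enumeration of a countable dense set of finitely supported vectors, and justifies applying $A^{n_m}$ term by term via the closedness of each power $A^n$ (established by a coordinatewise argument), exactly as you outline. The only cosmetic difference is that you package the construction as the proof of an unbounded hypercyclicity criterion, whereas the paper carries it out directly (and additionally extracts chaoticity from explicit periodic points).
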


We are to generalize the above construct of unbounded weighted backward shifts to Banach spaces with Schauder bases (see, e.g., \cite{Lyust-Sob,Markin2018EFA}) and show, in particular, that the operators in the prior theorem are \textit{chaotic}.

\section[Preliminaries]{Preliminaries}

Here, for the reader's convenience, we outline certain essential preliminaries concerning Banach spaces with Schauder bases.

\begin{defn}[Schauder Basis of a Banach Space]
A \textit{Schauder basis} (also a \textit{countable basis}) of a (real or complex) Banach space $(X,\|\cdot\|)$ is a \textit{countably infinite set} of elements $\left\{e_n\right\}_{n\in \N}$ in $X$ such that
\begin{equation*}
\forall\, x\in X\ \exists! (x_k)_{k\in\N}\subseteq \F:\ x=\sum_{k=1}^\infty x_ke_k,
\end{equation*}
where $\F:=\R$ or $\F:=\C$, the series called the \textit{Schauder expansion} of $x$ and the numbers $x_k$, $k\in\N$, are called the \textit{coordinates} of $x$ relative to the Schauder basis $\left\{e_n\right\}_{n\in \N}$ (see, e.g., \cite{Lyust-Sob,Markin2018EFA}). 
\end{defn}

\begin{exmps}\
\begin{enumerate}[label={\arabic*.}]
\item The set $\left\{e_n:=\left\{\delta_{nk}\right\}_{k=1}^\infty \right\}_{n\in \N}$, where $\delta_{nk}$ is the \textit{Kronecker delta}, is 
a \textit{Schauder basis} for $l_p$ ($1\le p<\infty$) and $(c_0,\|\cdot\|_\infty)$ ($c_0$ is the space of vanishing sequences), the latter equipped with the supremum norm
\[
c_0\ni x:=(x_k)_{k\in \N}\mapsto \|x\|_\infty:=\sup_{k\in \N}|x_k|,
\]
with
\[
x:=(x_k)_{k\in\N}=\sum_{k=1}^\infty x_ke_k,
\]
\item The set 
$\left\{e_n\right\}_{n\in \Z_+}$, where
$e_0:=(1,1,\dots)$ is a \textit{Schauder basis} for $(c,\|\cdot\|_\infty)$ ($c$ is the space of convergent sequences) with
\[
x:=(x_k)_{k\in\N}=\sum_{k=0}^\infty x_ke_k,
\]
where
\[
x_0=\lim_{k\to\infty}x_k.
\]
\item Other examples of Banach spaces with more intricate Schauder bases are
$L_p(a,b)$ ($1\le p<\infty$) and
$C[a,b]$ ($-\infty<a<b<\infty$), the latter equipped with the maximum norm
\[
C[a,b]\ni x\mapsto \|x\|_\infty:=\max_{a\le t\le b}|x(t)|
\]
(see, e.g., \cite{Lyust-Sob,SingerI}).
\end{enumerate} 
\end{exmps} 

\begin{rems}\
\begin{itemize}
\item Observe that the aforementioned Schauder bases for the spaces $l_p$ ($1\le p<\infty$), $(c_0,\|\cdot\|_\infty)$, and $(c,\|\cdot\|_\infty)$
consist of \textit{unit vectors}.

Always, without loss of generality, one can regard that, for a Schauder basis $\{e_n\}_{n\in \N}$ of a Banach space 
$(X,\|\cdot\|)$,
\begin{equation}\label{normalized1}
\|e_n\|=1,\ n\in \N,
\end{equation}
which immediately implies that, for any
\[
x=\sum_{k=1}^\infty x_ke_k,
\]
by the convergence of the series,
\[
|x_k|=\|x_ke_k\|\to 0,\ k\to\infty.
\]
\item In particular, an orthonormal basis of a separable Hilbert space is a Schauder basis
satisfying condition \eqref{normalized1}
(see, e.g., \cite{Akh-Glaz}).
\item Every Banach space $(X,\|\cdot\|)$ with a Schauder basis is \textit{separable}, a countable dense subset being
\begin{equation}\label{cds}
Y:=\left\{x:=\sum_{k=1}^n x_ke_k\,\middle|\, n\in \N,\ x_1,\dots,x_n\ \text{are rational/complex rational}\right\}.
\end{equation}

The \textit{basis problem} on whether every separable Banach space has a Schauder basis posed by Stefan Banach was negatively answered by Per Enflo in \cite{Enflo1973}.
\item The profound fact useful for our discourse is that, for each
$n\in \N$, the \textit{Schauder coordinate functional}
\[
X\ni x=\sum_{k=1}^\infty x_ke_k\mapsto
x_n(x):=x_n\in \F
\]
is well defined, \textit{linear}, and \textit{bounded} (see, e.g., \cite{Lyust-Sob,Markin2018EFA}). 

Hence, if
\[
X\ni x^{(n)}=\sum_{k=1}^\infty x_k^{(n)}e_k\to 
x=\sum_{k=1}^\infty x_ke_k,\ n\to \infty,\
\text{in}\ (X,\|\cdot\|),
\]
then, for every $k\in \N$,
\[
x_k^{(n)}\to x_k,\ n\to \infty,
\]
the converse statement not being true (see, e.g., \cite{Markin2018EFA}).
\end{itemize} 
\end{rems} 

\section[Weighted Backward Shift Operators in Banach Spaces with Schauder Bases]{Weighted Backward Shift Operators\\ in Banach Spaces with Schauder Bases}

In this section, we analyze the nature of generalized backward shift operators in Banach spaces with Schauder bases.

\begin{lem}\label{Lem1} 
In the (real or complex) Banach space $(X,\|\cdot\|)$ with a Schauder basis $\left\{ e_k\right\}_{k\in\N}$ such that 
\begin{equation}\label{normalized}
\|e_k\|=1,\ k\in \N,
\end{equation}
the weighted backward shift
\[
D(A)\ni x=\sum_{k=1}^\infty x_ke_k \mapsto Ax:=\sum_{k=1}^\infty w_k x_{k+1}e_k,
\]
where $(w_k)_{k\in \N}$ is an arbitrary real- or complex-termed weight sequence such that
\begin{equation}\label{absin}
1\le |w_{k}|\le |w_{k+1}|,\ k\in \N,
\end{equation}
with the domain
\[
D(A):=\left\{x=\sum_{k=1}^\infty x_ke_k\in X\,\middle|\,\sum_{k=1}^\infty w_k x_{k+1}e_k\in X \right\}
\]
is a linear operator with a dense subspace 
$C^\infty(A)$ (see \eqref{idv}) and such that, for each $n\in \N$, the operator $A^n$ is closed and, provided
\begin{equation}\label{unb}
\lim_{k\to\infty}|w_k|=\infty,
\end{equation}
is unbounded.
\end{lem}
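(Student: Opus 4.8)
The plan is to treat the four assertions — linearity, density of $C^\infty(A)$, closedness of each power $A^n$, and unboundedness under \eqref{unb} — in turn, the engine throughout being the continuity of the Schauder coordinate functionals. Linearity is immediate from the coordinatewise formula defining $A$, and $C^\infty(A)=\bigcap_{n\ge 0}D(A^n)$ (see \eqref{idv}) is a subspace as an intersection of the subspaces $D(A^n)$. For density I would observe that every \emph{finitely supported} vector $x=\sum_{k=1}^m x_k e_k$ lies in $C^\infty(A)$: applying $A$ to such a vector yields $\sum_{k=1}^{m-1} w_k x_{k+1} e_k$, again finitely supported, so all iterates $A^n x$ are finite sums and hence trivially well defined, with no convergence to check. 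Thus $C^\infty(A)\supseteq \spa\{e_k\}_{k\in\N}\supseteq Y$ with $Y$ the dense set \eqref{cds}, whence $C^\infty(A)$ is dense.

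For the closedness of $A^n$, I first record the explicit coordinate action obtained by iterating the defining formula,
\[
A^n x=\sum_{k=1}^\infty \Bigl(\prod_{j=0}^{n-1} w_{k+j}\Bigr) x_{k+n}\, e_k,
\]
so that the $k$-th coordinate of $A^n x$ is $\bigl(\prod_{j=0}^{n-1} w_{k+j}\bigr)x_{k+n}$. Now suppose $x^{(m)}\in D(A^n)$ with $x^{(m)}\to x$ and $A^n x^{(m)}\to y$ in $X$. Since each coordinate functional is bounded, the $k$-th coordinate of $x^{(m)}$ converges to that of $x$, and the $k$-th coordinate of $A^n x^{(m)}$ converges simultaneously to $\bigl(\prod_{j=0}^{n-1}w_{k+j}\bigr)x_{k+n}$ and to the $k$-th coordinate $y_k$ of $y$. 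Matching these limits gives $y_k=\bigl(\prod_{j=0}^{n-1}w_{k+j}\bigr)x_{k+n}$ for every $k$, i.e. $y$ coincides with the formal series above.

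The main obstacle is to convert this coordinate identity into the genuine membership $x\in D(A^n)$, which for the composition $A^n$ requires that all intermediate iterates $Ax,\dots,A^{n-1}x$ be defined, i.e. that the \emph{intermediate} Schauder expansions converge and not merely the top one. This is exactly where the hypothesis \eqref{absin}, $1\le |w_k|\le |w_{k+1}|$, must be used: writing the coordinates of the $(n-1)$-st iterate as $u_{k+1}=v_k/w_k$ in terms of the coordinates $v_k$ of the $n$-th iterate, the bound $|w_k|\ge 1$ keeps the passage from the convergent top-level series down to the intermediate series under control, so that $x$ indeed lies in $D(A^n)$ and $A^n x=y$; I expect verifying this transfer of convergence to be the delicate step. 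Finally, unboundedness under \eqref{unb} is read off from the basis vectors directly: since $e_{k+1}$ is finitely supported it lies in $D(A)$, and the defining formula gives $A e_{k+1}=w_k e_k$, so that by \eqref{normalized} one has $\|A e_{k+1}\|=|w_k|\,\|e_k\|=|w_k|\to\infty$ while $\|e_{k+1}\|=1$, precluding boundedness of $A$.
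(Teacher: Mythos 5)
Your overall route---coordinate functionals for closedness, finitely supported vectors for the density of $C^\infty(A)$, basis vectors for unboundedness---is the same as the paper's, and the linearity, density, and coordinate-identification parts are fine. But the step you yourself single out as delicate, namely passing from the coordinate identity $y_k=\bigl(\prod_{j=0}^{n-1}w_{k+j}\bigr)x_{k+n}$ to the membership $x\in D(A^n)$, is left unproved, and the mechanism you propose for it would not work in the stated generality. Your idea is that $|w_k|\ge 1$ lets you descend from convergence of the top-level series to convergence of the intermediate series by dominating coefficients; but in a Banach space with a merely \emph{conditional} Schauder basis, convergence of $\sum_k a_ke_k$ together with $|b_k|\le|a_k|$ does \emph{not} imply convergence of $\sum_k b_ke_k$, so coefficient domination proves nothing here. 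The paper does not attempt this transfer at all: it takes $D(A^n)$ to be exactly the set of $x$ for which the single $n$-fold series $\sum_{k=1}^\infty\bigl[\prod_{j=k}^{n+k-1}w_j\bigr]x_{k+n}e_k$ converges in $X$, so that once the coordinates of the limit $y$ are identified, $y$ itself witnesses the convergence of that series, and $x\in D(A^n)$, $A^nx=y$ follow at once. If you insist on reading $D(A^n)$ as the domain of the literal $n$-fold composition, you must either justify the identification of the two sets or restructure the argument; as written, your proof has a hole at its central point, and the patch you sketch would fail for a conditional basis.

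Separately, the lemma asserts that \emph{every} power $A^n$ is unbounded under \eqref{unb}, while you only treat $n=1$. Unboundedness of $A$ does not formally imply unboundedness of $A^n$ (an unbounded operator can have a bounded, even zero, square). The fix is the same computation one level up, and is what the paper does: $A^ne_{k+n}=\bigl(\prod_{j=k}^{k+n-1}w_j\bigr)e_k$, so by \eqref{normalized} and \eqref{absin}, $\|A^ne_{k+n}\|=\prod_{j=k}^{k+n-1}|w_j|\ge|w_k|\to\infty$ as $k\to\infty$, while $\|e_{k+n}\|=1$.
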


\begin{proof}\quad The \textit{linearity} of $A$ is obvious.

For each $n\in \N$,
\[
A^nx=\sum_{k=1}^\infty \left[\prod_{j=k}^{n+k-1}w_j\right]x_{k+n}e_k,\
\]
\[
x=\sum_{k=1}^\infty x_ke_k\in D(A^n)
=\left\{x=\sum_{k=1}^\infty x_ke_k\in X\,\middle|\,\sum_{k=1}^\infty \left[\prod_{j=k}^{n+k-1}w_j\right]x_{k+n}e_k\in X \right\}.
\]
with
\begin{equation}\label{incl1}
D(A^{n+1})\subseteq D(A^n),\ n\in\N,
\end{equation}
the linear operator $A^n$ being densely defined since the domain $D(A^n)$ contains the countable dense in $(X,\|\cdot\|)$ subset $Y$ (see \eqref{cds}) and hence, so does the subspace 
\[
C^\infty(A):=\bigcap_{n=0}^{\infty}D(A^n)
\]
of all possible initial values for the orbits under $A$, which is also \textit{dense} in $(X,\|\cdot\|)$.

Let
\[
D(A^n)\ni x^{(m)}=\sum_{k=1}^\infty x_k^{(m)}e_k\to 
x=\sum_{k=1}^\infty x_ke_k,\ m\to \infty,
\]
and
\[
A^nx^{(m)}=\sum_{k=1}^\infty \left[\prod_{j=k}^{n+k-1}w_j\right]x_{k+n}^{(m)}e_k
\to y=\sum_{k=1}^\infty y_ke_k,\ m\to \infty,
\]
then (see Preliminaries), for each $k\in\N$,
\[
x_k^{(m)}\to x_k,\ m\to\infty,
\]
and
\[
\left[\prod_{j=k}^{n+k-1}w_j\right]x_{k+n}^{(m)}\to y_k,\ m\to\infty.
\]

Whence, we infer that
\[
y_k=\left[\prod_{j=k}^{n+k-1}w_j\right]x_{k+n},\ m\in \N,
\]
and therefore,
\[
x\in D(A^n)\ \text{and}\ y=Ax,
\]
which implies that, for each $n\in \N$, the operator $A^n$ is \textit{closed} (see, e.g., \cite{Dun-SchI,Markin2018EFA}).

In view of \eqref{normalized}, with the weight sequence $(w_k)_{k\in \N}$ satisfying 
conditions \eqref{absin} and \eqref{unb}, for every $n\in\N$, the operator $A^n$ is  \textit{unbounded} since
\[
\|A^ne_{2n}\|=\left\|\left[\prod_{j=n}^{2n-1}w_j\right]e_{n}\right\|=\left[\prod_{j=n}^{2n-1}|w_j|\right]\|e_n\|\ge |w_n|\to \infty,\ n\to \infty.
\]
\end{proof}

\begin{rem}
Observe that, under restriction \eqref{normalized}, conditions \eqref{absin} and \eqref{unb} are \textit{sufficient} for the unboundedness of 
$A^n$ ($n\in \N$), but, generally, may not be necessary.
\end{rem}
\section[General Construct of Chaotic Unbounded Linear Operators in Banach Spaces with Schauder Bases]{General Construct of Chaotic Unbounded Linear Operators\\ in Banach Spaces with Schauder Bases}

Here, we furnish a straightforward approach to a general construct of chaotic unbounded linear operators in a (real or complex) Banach space with a Schauder basis utilizing the idea, which underlies the construct of the classical weighted backward shift Rolewicz's operators, i.e., the denseness in the spaces $l_p$ ($1\le p<\infty$) and $(c_0,\|\infty\|_\infty)$ of the subspace
$c_{00}$ of \textit{eventually zero} sequences.
In a Banach space $(X,\|\cdot\|)$ with a Schauder basis $\{e_n\}_{n\in\N}$ the natural analogue of such a subspace is that of \textit{``eventually zero"} vectors relative to 
$\{e_n\}_{n\in\N}$:
\[
\left\{x:=\sum_{k=1}^\infty x_ke_k\in X\,\middle|\, \exists\, n\in \N:\ x_k=0,\ k>n\right\}.
\]

Thus, in the spaces $l_p$ ($1\le p<\infty$) and $(c_0,\|\infty\|_\infty)$ relative to their standard Schauder bases described in Preliminaries, this subspace merely coincides with $c_{00}$, whereas in the space $(c,\|\cdot\|_\infty)$ relative to its standard Schauder basis described in Preliminaries, this subspace consists of all \textit{eventually constant} sequences.

\begin{thm}\label{Thm1} 
In the (real or complex) Banach space $(X,\|\cdot\|)$ with a Schauder basis $\left\{ e_k\right\}_{k\in\N}$ such that 
\begin{equation}\label{normalized2}
\|e_k\|=1,\ k\in \N,
\end{equation}
the weighted backward shift
\[
D(A)\ni x=\sum_{k=1}^\infty x_ke_k \mapsto Ax:=\sum_{k=1}^\infty w_k x_{k+1}e_k,
\]
where $(w_k)_{k\in \N}$ is an arbitrary real- or complex-termed weight sequence satisfying
\eqref{absin} and such that
\begin{equation}\label{serconv}
\sum_{k=1}^\infty \left|w_k\right|^{-1}<\infty,
\end{equation}
with the domain
\[
D(A):=\left\{x=\sum_{k=1}^\infty x_ke_k\in X\,\middle|\,\sum_{k=1}^\infty w_k x_{k+1}e_k\in X \right\}
\]
is a chaotic unbounded linear operator.
\end{thm}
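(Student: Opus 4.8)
The plan is to establish the three defining features of chaos in turn: that $A$ is unbounded, hypercyclic, and possessed of a dense set of periodic points. Unboundedness, the closedness of every power $A^n$, and the density of $C^\infty(A)$ are immediate from Lemma \ref{Lem1}, because the summability hypothesis \eqref{serconv} forces $|w_k|^{-1}\to 0$ and hence $\lim_{k\to\infty}|w_k|=\infty$, which is precisely condition \eqref{unb}. The workhorse for the two remaining parts is a right inverse of $A$ on the dense subspace of eventually zero vectors
\[
Z:=\left\{\sum_{k=1}^{n}x_ke_k\ \middle|\ n\in\N,\ x_1,\dots,x_n\in\F\right\}\subseteq C^\infty(A),
\]
namely the linear map $S:Z\to Z$ determined by $Se_k:=w_k^{-1}e_{k+1}$. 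A direct computation yields $ASz=z$ for all $z\in Z$, together with $S^ne_k=\bigl[\prod_{j=k}^{k+n-1}w_j\bigr]^{-1}e_{k+n}$ and, by induction, $A^NS^N=I$ on $Z$.

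For hypercyclicity I would invoke the Hypercyclicity Criterion for unbounded operators of \cite{B-Ch-S2001,deL-E-G-E2003} and verify its hypotheses on $Z$. First, for $z=\sum_{k=1}^{n}x_ke_k\in Z$ one has $A^mz=0$ once $m\ge n$, so $A^mz\to 0$. Second, $ASz=z$ and $S(Z)\subseteq Z$ hold by construction. Third, by \eqref{normalized2} and \eqref{absin},
\[
\|S^ne_k\|=\left[\prod_{j=k}^{k+n-1}|w_j|\right]^{-1}\le |w_{k+n-1}|^{-1}\longrightarrow 0,\quad n\to\infty,
\]
whence $S^nz\to 0$ for every $z\in Z$ by linearity. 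These three facts, combined with the closedness of the powers $A^n$ furnished by Lemma \ref{Lem1}, deliver a hypercyclic vector.

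For the density of periodic points I would fix an eventually zero vector $v=\sum_{k=1}^{M}v_ke_k$ and, for each $N\ge M$, propose the period-$N$ candidate
\[
x^{(N)}:=\sum_{m=0}^{\infty}S^{mN}v.
\]
Using \eqref{normalized2} and the factorwise bound $\prod_{j=k}^{k+mN-1}|w_j|\ge|w_{k+mN-1}|$ from \eqref{absin}, and noting that the indices $k+mN-1$ (for $1\le k\le M\le N$, $m\ge 1$) are pairwise distinct and at least $N$, I would estimate
\[
\sum_{m\ge 1}\|S^{mN}v\|\le\sum_{m\ge 1}\sum_{k=1}^{M}|v_k|\left[\prod_{j=k}^{k+mN-1}|w_j|\right]^{-1}\le\Bigl(\max_{1\le k\le M}|v_k|\Bigr)\sum_{q\ge N}|w_q|^{-1},
\]
so the series converges absolutely, hence in $X$, and $x^{(N)}\in X$ is well defined. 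Applying $A^N$ to the partial sums and passing to the limit through the closed operator $A^N$ (Lemma \ref{Lem1}), together with $A^NS^N=I$ on $Z$ and $A^Nv=0$, telescopes to $A^Nx^{(N)}=x^{(N)}$, so $x^{(N)}$ is a periodic point. By \eqref{serconv} the tail above tends to $0$, so $\|x^{(N)}-v\|\to 0$ as $N\to\infty$; since the eventually zero vectors are dense, the periodic points are dense, and therefore $A$ is chaotic.

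The step I expect to demand the most care is the construction of these periodic points: over an arbitrary Banach space equipped with only a normalized Schauder basis, coordinatewise decay does not by itself guarantee that $\sum x_ke_k$ converges, so the argument must be routed through \emph{absolute} convergence of the vector series $\sum_m S^{mN}v$, which is exactly where \eqref{serconv} enters (with \eqref{absin} used to compare the weight product against a single weight). Equally, interchanging $A^N$ with an infinite sum is not automatic for an unbounded operator and must be justified by the closedness of $A^N$ rather than by any continuity of $A^N$.
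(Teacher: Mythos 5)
Your argument is correct, but for the hypercyclicity part it takes a genuinely different route from the paper. The paper does not invoke the Hypercyclicity Criterion: following Rolewicz's original scheme (cf.\ \cite{Rolewicz1969,Markin2018(9)}), it defines the right inverse $B$ on \emph{all} of $X$ (the series $\sum_k w_k^{-1}x_ke_{k+1}$ converging by \eqref{serconv} since the coordinates of any $x\in X$ are bounded), enumerates the countable dense set $Y$ of \eqref{cds} as $(y^{(m)})$, inductively selects a rapidly increasing sequence $(n_m)$ satisfying \eqref{est1}, and exhibits the explicit hypercyclic vector $x=\sum_j B^{n_j}y^{(j)}$, using the closedness of each $A^{n_m}$ to compute $A^{n_m}x=y^{(m)}+\sum_{j>m}B^{n_j-n_m}y^{(j)}$ with a tail estimate controlled by $\sum_{j>m}|w_j|^{-1}$. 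Your appeal to the unbounded Hypercyclicity Criterion of \cite{B-Ch-S2001,deL-E-G-E2003} is legitimate here --- Lemma \ref{Lem1} supplies exactly the hypothesis that every power $A^n$ is closed and densely defined, your $Z$ lies in $C^\infty(A)$ and is dense, and the three conditions $A^mz\to0$, $ASz=z$, $S^nz\to0$ are verified correctly via \eqref{absin} and \eqref{unb} --- so your proof is shorter and outsources the Baire-category/constructive work to the cited theorem, at the cost of being less self-contained; it also lets you work with $S$ only on the eventually zero vectors rather than proving $B$ is defined on all of $X$. For the periodic points your construction $x^{(N)}=\sum_{m\ge0}S^{mN}v$ is the paper's formula \eqref{pp} in disguise, and your absolute-convergence estimate (exploiting $|w_j|\ge1$ and the disjointness of the index blocks $k+mN-1$) together with the passage through the closed operator $A^N$ matches the paper's argument in substance; your closing remarks correctly identify the two places where care is needed in a general Schauder-basis setting.
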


\begin{proof}\quad 
Observe that, since condition \eqref{serconv},
implies condition \eqref{unb}, and hence, by Lemma \ref{Lem1}, $A$ is a linear operator with a dense subspace $C^\infty(A)$ (see \eqref{idv}) and such that, for each $n\in \N$, the operator $A^n$ is \textit{closed} and \textit{unbounded}.

We proceed in a similar fashion as in the proof of \cite[Theorem $2.1$]{Markin2018(9)} (cf. \cite[Theorem $1$]{Rolewicz1969} and \cite[Example $2.18$]{Grosse-Erdmann-Manguillot}). 

The right inverse of $A$
\begin{equation}\label{rightinv}
Bx:=\sum_{k=1}^\infty w_k^{-1}x_ke_{k+1},
\end{equation}
is linear operator well defined on the entire space $X$ since, 
for any 
\[
x=\sum_{k=1}^\infty x_ke_k\in X,
\]
in view of \eqref{normalized2},
\[
|x_k|=\|x_ke_k\|\to 0,\ k\to\infty,
\]
(see Preliminaries), and hence, by the estimate
\[
\left\|w_k^{-1}x_ke_{k+1}\right\|
=|w_k|^{-1}|x_k|\|e_{k+1}\|\le \left[\sup_{j\in\N}|x_j|\right]\left|w_k\right|^{-1},\ k\in\N,
\]
and, in view of condition \eqref{serconv}, the series in \eqref{rightinv} converges.

Further, for any $n\in \N$ we have:
\[
B^nx=\sum_{k=1}^\infty  \left[\prod_{j=k}^{n+k-1}w_j^{-1}\right]x_ke_{k+n},\ x=\sum_{k=1}^\infty x_ke_k\in X.
\]

Let
\begin{equation}\label{def2}
\left\{y^{(m)}:=\sum_{k=1}^\infty y^{(m)}_ke_k\right\}_{m\in \N}
\end{equation}
be an arbitrary enumeration of the countable dense subset $Y$ (see \eqref{cds}) with
\begin{equation}\label{def1}
k_m:=\max\left\{k\in \N\,\middle|\,y^{(m)}_k\neq 0 \right\},\ m\in\N.
\end{equation}

Inductively, one can choose an increasing sequence
$(n_m)_{m\in \N}$ of natural numbers such that, for all $j,m\in \N$ with $m>j$,
\begin{equation}\label{est1}
n_m-n_j\ge \max(m,k_j)\ \text{and}\
\prod_{i=1}^{n_m}|w_i|
\ge 
\left[\prod_{i=n_m-n_j+1}^{n_m}|w_i|\right]|w_m| \sum_{k=1}^{k_m}\left|y_k^{(m)}\right|.
\end{equation}

Let us show that the vector
\begin{equation*}
x:=\sum_{j=1}^\infty B^{n_j}y^{(j)}
\end{equation*}
is \textit{hypercyclic} for $A$, the series converging in $(X,\|\cdot\|)$
by condition \eqref{serconv} since, for any $j=2,3,\dots$, in view of \eqref{normalized2}, \eqref{absin}, and \eqref{est1},
\begin{multline*}
\left\|B^{n_j}y^{(j)}\right\|
=\left\|\sum_{k=1}^\infty  \left[\prod_{i=k}^{n_j+k-1}w_i^{-1}\right]y_k^{(j)}e_{k+n_j}\right\|
\le \sum_{k=1}^{k_j}\left[\prod_{i=k}^{n_j+k-1}\left|w_i\right|^{-1}\right] \left|y_k^{(j)}\right|\|e_{k+n_j}\|
\\
\ \ \
\le \left[\prod_{i=1}^{n_j}\left|w_i\right|^{-1}\right] \sum_{k=1}^{k_j}\left|y_k^{(j)}\right|\le \left|w_j\right|^{-1}.
\hfill
\end{multline*}

Further, for each $m\in \N$,
\begin{multline*}
\sum_{j=1}^\infty A^{n_m}B^{n_j}y^{(j)}
=\sum_{j=1}^{m-1} A^{n_m-n_j}y^{(j)}+y^{(m)}
+\sum_{j=m+1}^\infty B^{n_j-n_m}y^{(j)}
\\
\hfill
\text{since, by \eqref{est1}, for $j=1,\dots,m-1$, $n_m-n_j\ge k_j$, by \eqref{def1}, $A^{n_k-n_j}y^{(j)}=0$;}
\\
\ \ \
=y^{(m)}+\sum_{j=m+1}^\infty B^{n_j-n_m}y^{(j)}.
\hfill
\end{multline*}

Since, for all $m,j\in \N$ with $j\ge m+1$, in view of \eqref{normalized2}, \eqref{absin}, and \eqref{est1},
\begin{multline}\label{est2}
\left\|B^{n_j-n_m}y^{(j)}\right\|
=\left\|\sum_{k=1}^\infty  \left[\prod_{i=k}^{n_j-n_m+k-1}w_i^{-1}\right]y_k^{(j)}e_{k+n_j-n_m}\right\|
\\
\shoveleft{
\le \sum_{k=1}^{k_j}\left[\prod_{i=k}^{n_j-n_m+k-1}\left|w_i\right|^{-1}\right] \left|y_k^{(j)}\right|\|e_{k+n_j-n_m}\|
\le \left[\prod_{i=1}^{n_j-n_m}\left|w_i\right|^{-1}\right] \sum_{k=1}^{k_j}\left|y_k^{(j)}\right|\le \left|w_j\right|^{-1}.
}\\
\end{multline}

This implies that, for any $m\in \N$, the series
\[
\sum_{j=1}^\infty A^{n_m}B^{n_j}y^{(j)}
\]
converges in $(X,\|\cdot\|)$, and hence, by the \textit{closedness} of $A^{n_m}$ and in view of inclusion \eqref{incl1},
\[
x\in \bigcap_{m=1}^{\infty}D(A^{n_m})=C^\infty(A)
\]
and 
\[
A^{n_m}x=y^{(m)}+\sum_{j=m+1}^\infty B^{n_j-n_m}y^{(j)},\ m\in \N.
\]

Furthermore, since, by \eqref{est2}, for each $k\in \N$,
\[
\left\|A^{n_m}x-y^{(m)}\right\|
\le \sum_{j=m+1}^\infty \left\|B^{n_j-n_m}y^{(j)}\right\|
\le \sum_{j=k+1}^\infty
\left|w_j\right|^{-1}\to 0,\ k\to\infty,
\] 
which, in view of the denseness of $Y$ in $(X,\|\cdot\|)$, shows that the orbit
$\left\{A^nx\right\}_{n\in\Z_+}$ of $x$ under $A$
is dense in $(X,\|\cdot\|)$ as well. This implies that the operator $A$ is hypercyclic.

Now, let us show that the operator $A$ is \textit{chaotic} (cf. \cite[Example $2.32$]{Grosse-Erdmann-Manguillot}). Indeed, for any $N\in \N$, let $x_1,\dots,x_N\in \F$ be arbitrary, then
\begin{equation}\label{pp}
x:=\sum_{m=1}^Nx_me_m+ \sum_{k=1}^{\infty}\left[\sum_{m=1}^N
\left[\prod_{i=(k-1)N+m}^{kN+m-1}w_i^{-1}\right]x_me_{kN+m}\right],
\end{equation}
the series converging in $(X,\|\cdot\|)$ by \eqref{serconv} since, for any $k\in \N$, in view of \eqref{normalized2} and \eqref{absin},
\begin{multline}\label{pp2}
\left\|\sum_{m=1}^N\left[\prod_{i=(k-1)N+m}^{kN+m-1}w_i^{-1}\right]x_me_{kN+m}\right\|
\le \sum_{m=1}^N \left[\prod_{i=(k-1)N+m}^{kN+m-1}\left|w_i\right|^{-1}\right]|x_m|\|e_{kN+m}\|
\\
\shoveleft{
\le \left[\sum_{m=1}^N|x_m|\right]\left|w_{kN}\right|^{-1},
}\\
\end{multline}
we have:
\[
x\in D(A^N)\ \text{and}\ A^N x=x.
\]

For any 
\[
y:=\sum_{m=1}^n y_me_m\in Y
\]
with some $n\in \N$ (see \eqref{cds}) and arbitrary
$N\ge n$, consider the periodic point defined by \eqref{pp} with
\[
x_m:=\begin{cases}
y_m,& m=1,\dots,n,\\
0,& m=n+1,\dots,N.
\end{cases}
\]

Then, in view of \eqref{pp2}, by condition \eqref{serconv},
\begin{multline*}
\|x-y\|=\left\|\sum_{k=1}^{\infty}\left[\sum_{m=1}^N\left[\prod_{i=(k-1)N+m}^{kN+m-1}w_i^{-1}\right]x_me_{kN+m}\right]\right\|
\\
\ \ \
\le
\left[\sum_{m=1}^n|y_m|\right]\sum_{k=1}^{\infty}\left|w_{kN}\right|^{-1} 
\le
\left[\sum_{m=1}^n|y_m|\right]\sum_{k=N}^{\infty}\left|w_{k}\right|^{-1}
\to 0,\ N\to \infty.
\hfill
\end{multline*}

Whence, in view of the denseness of $Y$ in $(X,\|\cdot\|)$, we infer that the set of periodic points of $A$ is dense in $(X,\|\cdot\|)$ as well,
which completes the proof.
\end{proof}

The exponential weight sequence
\[
w_n:=\lambda^n,\ n\in\N,
\]
with $\lambda\in \R$ or $\lambda\in \C$, $|\lambda|>1$ satisfying conditions \eqref{absin} and \eqref{serconv}, for the sequence spaces $l_p$ ($1\le p<\infty$) or $(c_0,\|\cdot\|_\infty)$, we arrive at the following corollary improving the result of Theorem \ref{TH}.

\begin{cor}\label{CC}
In the (real or complex) sequence space $l_p$ ($1\le p<\infty$) or $(c_0,\|\cdot\|_\infty)$, the weighted backward shift
\[
D(A)\ni x:=(x_k)_{k\in \N} \mapsto Ax:=(\lambda^k x_{k+1})_{k\in \N},
\]
where $\lambda\in \R$ or $\lambda\in \C$, $|\lambda|>1$ is arbitrary, with the domain
\[
D(A):=\left\{(x_k)_{n\in \N}\in X\,\middle|\,(\lambda^k x_{k+1})_{k\in \N}\in X \right\}
\]
is a chaotic unbounded linear operator.
\end{cor}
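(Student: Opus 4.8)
The plan is to derive this corollary as an immediate specialization of Theorem \ref{Thm1}, so the entire task reduces to verifying that the exponential weight sequence $w_k:=\lambda^k$ meets the two quantitative hypotheses of that theorem relative to an appropriate normalized Schauder basis. First I would fix the ambient structure: on $l_p$ ($1\le p<\infty$) or $(c_0,\|\cdot\|_\infty)$ the standard set of unit vectors $\{e_n\}_{n\in\N}$ described in the Preliminaries is a Schauder basis satisfying the normalization requirement \eqref{normalized2}, and relative to this basis the coordinates of $x=(x_k)_{k\in\N}$ are the entries $x_k$ themselves. Consequently, the weighted backward shift $Ax=\sum_{k=1}^\infty w_kx_{k+1}e_k$ of Theorem \ref{Thm1} with $w_k=\lambda^k$ acts exactly as $Ax=(\lambda^k x_{k+1})_{k\in\N}$, with the same domain, so the operator of the corollary and the operator of the theorem coincide.

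Next I would check the two conditions on the weights. Since $|\lambda|>1$, we have $|w_k|=|\lambda|^k\ge|\lambda|>1$ for every $k\in\N$, and $|w_{k+1}|=|\lambda|^{k+1}=|\lambda|\cdot|\lambda|^k\ge|\lambda|^k=|w_k|$, so the monotonicity-and-boundedness-below condition \eqref{absin} holds. For the summability condition \eqref{serconv}, the reciprocals form a convergent geometric series,
\[
\sum_{k=1}^\infty|w_k|^{-1}=\sum_{k=1}^\infty|\lambda|^{-k}=\frac{|\lambda|^{-1}}{1-|\lambda|^{-1}}<\infty,
\]
because $|\lambda|^{-1}<1$. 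With both \eqref{absin} and \eqref{serconv} in hand, Theorem \ref{Thm1} applies verbatim and yields that $A$ is a chaotic unbounded linear operator.

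There is, honestly, no genuine obstacle here: the corollary is a clean instance of the general construct, and the only thing requiring attention is confirming that the single numerical constraint $|\lambda|>1$ simultaneously forces both the lower bound and monotonicity of $|w_k|$ and the convergence of $\sum_{k=1}^\infty|w_k|^{-1}$ --- which it does, since $|\lambda|>1$ is exactly the condition making $|\lambda|^{-1}$ a geometric ratio in $(0,1)$. The only point worth a remark is that this corollary strictly improves Theorem \ref{TH}, whose conclusion was merely hypercyclicity; the added summability of the reciprocal weights is precisely what upgrades the conclusion to chaoticity, by supplying the dense set of periodic points constructed in the proof of Theorem \ref{Thm1}.
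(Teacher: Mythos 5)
Your proposal is correct and follows exactly the paper's route: the paper likewise obtains the corollary by observing that the standard unit-vector Schauder basis of $l_p$ or $(c_0,\|\cdot\|_\infty)$ is normalized and that the exponential weights $w_k=\lambda^k$ with $|\lambda|>1$ satisfy conditions \eqref{absin} and \eqref{serconv}, so Theorem \ref{Thm1} applies directly. Your explicit verification of the two weight conditions is a complete and accurate filling-in of the paper's one-line justification.
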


\begin{rem}
For a detailed discourse on the weighted shifts in Fr\'echet sequence spaces, see \cite[Chapter $4$]{Grosse-Erdmann-Manguillot}.
\end{rem}

\section{Acknowledgments}

I would like to express sincere gratitude to my graduate advisees, Mr. Edward Sichel and Mr. John Jimenez, Department of Mathematics, California State University, Fresno, for stimulating discussions and their keen interest in the subject matter.

 
\end{document}